\documentclass[11pt]{article}

\topmargin  = 0 in \oddsidemargin = 0.25 in
\setlength{\textheight}{8.6 in} \setlength{\textwidth}{6 in}
\setlength{\unitlength}{1.0 mm}

\usepackage{amsmath}
\usepackage{amsfonts}
\usepackage{color}
\usepackage{amssymb}
\usepackage{graphicx}
\usepackage{hyperref}
\usepackage{enumerate}
\usepackage[all]{xy}

 \allowdisplaybreaks

\begin{document}

\newtheorem{problem}{Problem}

\newtheorem{theorem}{Theorem}[section]
\newtheorem{corollary}[theorem]{Corollary}
\newtheorem{definition}[theorem]{Definition}
\newtheorem{conjecture}[theorem]{Conjecture}
\newtheorem{question}[theorem]{Question}
\newtheorem{lemma}[theorem]{Lemma}
\newtheorem{proposition}[theorem]{Proposition}
\newtheorem{quest}[theorem]{Question}
\newtheorem{example}[theorem]{Example}

\newenvironment{proof}{\noindent {\bf
Proof.}}{\rule{2mm}{2mm}\par\medskip}

\newenvironment{proofof3}{\noindent {\bf
Proof of  Theorem 1.2.}}{\rule{2mm}{2mm}\par\medskip}

\newenvironment{proofof5}{\noindent {\bf
Proof of  Theorem 1.3.}}{\rule{2mm}{2mm}\par\medskip}

\newcommand{\remark}{\medskip\par\noindent {\bf Remark.~~}}
\newcommand{\pp}{{\it p.}}
\newcommand{\de}{\em}

\title{  {Inequalities regarding partial trace and 
partial determinant}\thanks{This research was supported by  NSFC (Nos. 11671402, 11871479),  
Hunan Provincial Natural Science Foundation (2016JJ2138, 2018JJ2479) and  Mathematics and Interdisciplinary Sciences Project of CSU.
 E-mail addresses: liyongtaosx@outlook.com(Y. Li), fenglh@163.com (L. Feng), huangzhengmath@163.com(Z. Huang), wjliu6210@126.com(W. Liu, corresponding author).} }

\author{Yongtao Li$^a$, Lihua Feng$^b$, Zheng Huang$^b$, Weijun Liu$^{\dag,b}$\\
{\small ${}^a$College of Mathematics and Econometrics, Hunan University} \\
{\small Changsha, Hunan, 410082, P.R. China } \\
{\small $^b$School of Mathematics and Statistics, Central South University} \\
{\small New Campus, Changsha, Hunan, 410083, P.R. China. } }

\maketitle

\vspace{-0.5cm}

\begin{abstract}
In this paper, we first present   simple proofs of Choi's results \cite{Choi17}, 
  then we give a short alternative  proof  
for Fiedler and Markham's inequality \cite{FM94}. 
We also obtain additional matrix inequalities related to partial determinants. 
 \end{abstract}

{{\bf Key words:}  Partial traces; 
Fiedler and Markham's inequality; Thompson's inequality.  } \\
{2010 Mathematics Subject Classication.  15A45, 15A60, 47B65.}

\section{Introduction}
Throughout the paper, we use the following standard notation. 
The set of $n\times n$ complex matrices is denoted by $\mathbb{M}_n(\mathbb{C})$, 
and the identity matrix of order $k$ by  $I_k$, or $I$ for short. 
In this paper, 
we are interested in complex block matrices. Let $\mathbb{M}_n(\mathbb{M}_k)$ 
be the set of complex matrices partitioned into $n\times n$ blocks 
with each block being $k\times k$. 
The element of $\mathbb{M}_n(\mathbb{M}_k)$ is usually written as ${ H}=[H_{ij}]_{i,j=1}^n$, 
where $H_{ij}\in \mathbb{M}_k$ for all $i,j$. 
It is known that 
the matrices $[\mathrm{det} (H_{ij})]_{i,j=1}^n$ and $[\mathrm{tr} (H_{ij})]_{i,j=1}^n$ 
are positive semidefinite whenever $[H_{ij}]_{i,j=1}^n$ is positive semidefinite, 
e.g., \cite[p. 221 and p. 237]{Zhang}.

If ${ H}=[H_{ij}]_{i,j=1}^n \in \mathbb{M}_n(\mathbb{M}_k)$ is a  positive semidefinite matrix, 
the classical Fischer's inequality \cite[p. 506]{Horn} says that 
\begin{equation}\label{Fischer}
 \prod\limits_{i=1}^k \det H_{ii} \ge \det { H}. 
\end{equation}

In 1961, Thompson \cite{Thom61} proved the following elegant determinantal inequality (\ref{thom}), 
which is an extention of Fischer's result (\ref{Fischer}). 
The main weapon of Thompson's proof 
is an identity of Grassmann products, see \cite{Lin16} for a short proof.  

\begin{theorem}
Let ${ H}=[H_{ij}]_{i,j=1}^n \in \mathbb{M}_n(\mathbb{M}_k)$ be positive semidefinite. Then 
\begin{equation} \label{thom}
\det \Bigl( [\det H_{ij}]_{i,j=1}^n \Bigr) \ge \det { H}.
\end{equation}
\end{theorem}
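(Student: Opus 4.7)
The plan is to pass to the $k$-th compound matrix after a carefully chosen Cholesky factorization. By continuity I may assume $H \succ 0$, and factor $H = X^*X$ via the scalar Cholesky factorization with $X$ upper-triangular. The key observation to exploit later is that entry-wise upper-triangularity forces $X$ to be \emph{block} upper-triangular in the $n \times n$ partition into $k \times k$ blocks as well; that is, the $(i,j)$ block $X_{ij}$ vanishes whenever $i > j$.

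Next I would split $X = [X_1 \mid \cdots \mid X_n]$ into column blocks $X_j \in \mathbb{M}_{nk \times k}(\mathbb{C})$ and apply the Cauchy--Binet formula twice. The first application, to each $X_i^*X_j$, gives
$$\det H_{ij} \;=\; \det(X_i^*X_j) \;=\; \sum_{\alpha \in \binom{[nk]}{k}} \overline{\det X_i[\alpha \mid \cdot]}\; \det X_j[\alpha \mid \cdot] \;=\; \langle y_i, y_j\rangle,$$
where $(y_j)_\alpha := \det X_j[\alpha \mid \cdot]$. Thus $[\det H_{ij}]_{i,j=1}^n = Y^*Y$ with $Y = [y_1 \mid \cdots \mid y_n]$ of size $\binom{nk}{k}\times n$, and a second application yields
$$\det\bigl([\det H_{ij}]_{i,j=1}^n\bigr) \;=\; \det(Y^*Y) \;=\; \sum_\beta \bigl|\det Y[\beta \mid \cdot]\bigr|^2,$$
with $\beta$ ranging over $n$-subsets of $\binom{[nk]}{k}$. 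Every summand on the right is non-negative, so to conclude it suffices to exhibit a single $\beta$ whose contribution already equals $\det H$.

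The step I expect to drive the whole argument is choosing $\beta$ cleverly, and this is where the block-triangular Cholesky pays off. I would take $\beta^\star = \{B_1, \dots, B_n\}$ with $B_i = \{(i-1)k+1, \dots, ik\}$; then $(Y[\beta^\star \mid \cdot])_{ij} = \det X_j[B_i \mid \cdot] = \det X_{ij}$, so $Y[\beta^\star \mid \cdot] = [\det X_{ij}]_{i,j=1}^n$, which is itself upper-triangular because $X$ is block upper-triangular. Hence $\det Y[\beta^\star \mid \cdot] = \prod_{i=1}^n \det X_{ii} = \det X$, the last equality being the block-triangular determinant formula. Combined with $\det H = |\det X|^2$, this gives $|\det Y[\beta^\star \mid \cdot]|^2 = \det H$, and discarding all other non-negative terms in the Cauchy--Binet expansion produces the desired inequality. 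The entire argument hinges on the single observation that the block upper-triangular Cholesky factor $X$ simultaneously realizes $\det X$ and $\det[\det X_{ij}]_{i,j=1}^n$ as the same quantity; everything else is Cauchy--Binet.
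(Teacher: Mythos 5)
Your proof is correct. Note that the paper does not actually prove this theorem; it records it as Thompson's 1961 result and points to the Grassmann-product identity and the short proof of Lin and Sra \cite{Lin16} for the argument. What you have written is essentially that compound-matrix proof in coordinates: your first application of Cauchy--Binet to $\det(X_i^*X_j)$ is the multiplicativity of the $k$-th compound (equivalently, of $\wedge^k$), so that $[\det H_{ij}]_{i,j=1}^n=Y^*Y$ with $y_j=\wedge^k X_j$, and your choice of the index set $\beta^\star$ in the second Cauchy--Binet expansion is the standard device for extracting the single term $\bigl|\prod_{i=1}^n\det X_{ii}\bigr|^2=|\det X|^2=\det H$ from $\det(Y^*Y)$, using that the entrywise upper triangular Cholesky factor is automatically block upper triangular. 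All steps check out, including the continuity reduction to the positive definite case.
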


Fiedler and Markham (1994) proved an analogous determinantal inequality for trace. 
In fact, Minghua Lin pointed out that in the proof of \cite[Corollary 1]{FM94}, 
Fiedler and Markham used the superadditivity of determinant functional, 
which can be improved by Fan-Ky's determinantal inequality \cite{FanKy}, 
i.e., the log-concavity of the determinant over the positive semidefinite matrices. 
 Here we state the stronger version (\ref{eqfm}),  see \cite{LinZhang} for more details.

\begin{theorem}\label{Lin}
Let ${ H}=[H_{ij}]_{i,j=1}^n \in \mathbb{M}_n(\mathbb{M}_k)$ be positive semidefinite. Then 
\begin{equation} \label{eqfm}
\left(\frac{\det \bigl( [\mathrm{tr} H_{ij}]_{i,j=1}^n \bigr)}{k^n}\right)^k \ge \det { H}.
\end{equation}
\end{theorem}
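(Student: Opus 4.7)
The plan is to reduce (\ref{eqfm}) to two classical facts already invoked in the introduction: Fischer's inequality (\ref{Fischer}) and the log-concavity of $\det$ on positive semidefinite matrices (Fan-Ky). The bridge will be a canonical reshuffling of $H$ whose diagonal blocks sum to the partial trace $[\mathrm{tr}\,H_{ij}]$.

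The first step is to view $H\in\mathbb{M}_n(\mathbb{M}_k)$ as an operator on $\mathbb{C}^n\otimes\mathbb{C}^k$ and conjugate by the flip unitary $F:e_i\otimes f_\alpha\mapsto f_\alpha\otimes e_i$, producing a positive semidefinite $\tilde H=FHF^*\in\mathbb{M}_k(\mathbb{M}_n)$ with $\det\tilde H=\det H$. A direct entrywise computation will show that the $(\alpha,\alpha)$-block of $\tilde H$ has $(i,j)$-entry equal to $(H_{ij})_{\alpha\alpha}$, whence
\[
\sum_{\alpha=1}^{k}\tilde H_{\alpha\alpha}=\bigl[\mathrm{tr}\,H_{ij}\bigr]_{i,j=1}^{n}.
\]
This identity is the heart of the argument.

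Then, applying Fischer's inequality (\ref{Fischer}) to $\tilde H$ will give $\prod_{\alpha=1}^{k}\det\tilde H_{\alpha\alpha}\ge\det H$, and log-concavity of $\det$ with uniform weights $1/k$ applied to the diagonal blocks of $\tilde H$ will give
\[
\det\!\left(\tfrac{1}{k}\sum_{\alpha=1}^{k}\tilde H_{\alpha\alpha}\right)\ge\prod_{\alpha=1}^{k}\bigl(\det\tilde H_{\alpha\alpha}\bigr)^{1/k}\ge(\det H)^{1/k}.
\]
Since the left-hand side equals $\det\bigl([\mathrm{tr}\,H_{ij}]\bigr)/k^{n}$, raising to the $k$-th power yields (\ref{eqfm}). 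The main---and essentially only---obstacle is the first step: spotting the flip $F$ and checking the identity $\sum_{\alpha}\tilde H_{\alpha\alpha}=[\mathrm{tr}\,H_{ij}]$. After that, no spectral or majorization machinery is required; the conclusion falls out from Fischer and Fan-Ky alone.
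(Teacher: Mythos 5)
Your proposal is correct and follows essentially the same route as the paper: your flip unitary $F$ is exactly the permutation realizing the paper's reshuffled matrix $\widetilde{H}$ (Lemma \ref{lem3} and Theorem 2.4), your identity $\sum_{\alpha}\tilde H_{\alpha\alpha}=[\mathrm{tr}\,H_{ij}]$ is Lemma \ref{tr1tr2}, and the combination of Fischer's inequality on $\widetilde H$ with Fan--Ky log-concavity of the determinant is precisely the paper's proof of Theorem \ref{Lin}. No substantive difference.
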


Now we introduce the definition of partial traces, 
which comes from quantum information theory. 
Given ${ H}=[H_{ij}]_{i,j=1}^n $ with $H_{ij}\in \mathbb{M}_k$, 
the first partial trace (map) $H \mapsto \mathrm{tr}_1H \in \mathbb{M}_k$ is defined as the  
adjoint map of the imbedding map $X \mapsto I_n\otimes X\in \mathbb{M}_n\otimes \mathbb{M}_k$. 
Here ``$\otimes$'' stands for the tensor product (or named the Kronecker product). 
Correspondingly, the second partial trace (map) \cite[p. 12]{Petz} $H\mapsto \mathrm{tr}_2H\in \mathbb{M}_n$ is 
defined as the adjoint map of the imbedding map $Y\mapsto Y\otimes I_k \in \mathbb{M}_n\otimes 
\mathbb{M}_k$. Therefore, we have

\[ \langle I_n\otimes X,H\rangle =\langle X, \mathrm{tr}_1H \rangle ,
\quad \forall X\in \mathbb{M}_k; \]
and 
\[ \langle Y\otimes I_k, H\rangle =\langle Y,\mathrm{tr}_2 H\rangle, 
\quad \forall Y\in \mathbb{M}_n. \]
The visualized forms of the partial traces 
are actually given in  \cite[Proposition 4.3.10]{Bhatia07} as
\[ \mathrm{tr}_1 { H}=\sum\limits_{i=1}^n H_{ii},\quad 
\mathrm{tr}_2{ H}=[\mathrm{tr}H_{ij}]_{i,j=1}^n. \] 

It is easy to see that $\mathrm{tr}_1 H$ and $\mathrm{tr}_2H$ are positive semidefinite 
whenever $H$ is positive semidefinite. With what has been just defined, 
inequality (\ref{eqfm}) can be written as 
\begin{equation} \label{dettr2}
\left( \frac{\det (\mathrm{tr}_2H)}{k^n}\right)^k \ge \det H.
\end{equation}

Recently, Choi  introduced the definition of ``partial determinant'' and 
derived some interesting properties in \cite{Choi17}. 
For a given block matrix ${ H}$, 
imitating the appearance of $\mathrm{tr}_2 { H}$, 
a natural definition of $\det_2 { H}$ is given as 
\[ \mathrm{det}_2 { H}=[\det H_{ij}]_{i,j=1}^n \in \mathbb{M}_n. \]
However, it does not seem easy to give the definition 
 of $\mathrm{det}_1{ H}$ analogous to $\mathrm{tr}_1 { H}$. 
The following ingenious mind originated from Choi. 
For ${ H}=[H_{ij}]_{i,j=1}^n \in \mathbb{M}_n(\mathbb{M}_k)$, 
where $H_{i,j}=\left[h_{l,m}^{i,j}\right]_{l,m=1}^k$, we define   
$\mathrm{det}_1{ H}\in \mathbb{M}_k$ by 
\[ \mathrm{det}_1 { H}=[\det G_{lm}]_{l,m=1}^k, \]
where $G_{lm}=\left[h_{l,m}^{i,j}\right]_{i,j=1}^n$. 
For convenience, we will denote $\widetilde{ H}$ to be  
\[ \widetilde{H}=\left[ \left[ h_{l,m}^{i,j}\right]_{i,j=1}^n\right]_{l,m=1}^k 
\in \mathbb{M}_k(\mathbb{M}_n). \]

Motivated by (\ref{dettr2}), Choi   \cite[Theorem 6]{Choi17} proved 
\begin{theorem} \label{choi}
Let  $H\in \mathbb{M}_n(\mathbb{M}_k)$ be positive semidefinite. Then 
\begin{equation}\label{trdet1}
\left( \frac{\mathrm{tr}(\mathrm{det}_1 H)}{k}\right)^k\ge \det H.
\end{equation}
\end{theorem}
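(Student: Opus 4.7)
The plan is to reduce inequality \eqref{trdet1} to a clean combination of two very classical facts: the arithmetic--geometric mean inequality applied to the diagonal entries of $\det_1 H$, and Fischer's inequality \eqref{Fischer} applied not to $H$ itself but to the reshuffled matrix $\widetilde{H}\in \mathbb{M}_k(\mathbb{M}_n)$ introduced just before the theorem. The bridging observation is that $\widetilde{H}$ is obtained from $H$ by a perfect-shuffle permutation similarity (the canonical permutation that swaps the two tensor factors of $\mathbb{C}^n\otimes \mathbb{C}^k$). Hence $\widetilde{H}$ is positive semidefinite whenever $H$ is, and moreover $\det \widetilde{H}=\det H$ since permutation similarity preserves the spectrum.

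With this in hand I would carry out three short steps. First, unwind the definition:
\[ \mathrm{tr}(\mathrm{det}_1 H)=\sum_{l=1}^{k}\det G_{ll}, \qquad G_{ll}=\left[h^{i,j}_{l,l}\right]_{i,j=1}^{n}, \]
and notice that each $G_{ll}$ is precisely the $l$-th diagonal block of $\widetilde{H}\in \mathbb{M}_k(\mathbb{M}_n)$. Second, since the numbers $\det G_{ll}$ are nonnegative (as $G_{ll}$ is a principal submatrix of a positive semidefinite matrix), the AM--GM inequality yields
\[ \left(\frac{\mathrm{tr}(\mathrm{det}_1 H)}{k}\right)^{k}=\left(\frac{1}{k}\sum_{l=1}^{k}\det G_{ll}\right)^{k}\ge \prod_{l=1}^{k}\det G_{ll}. \]
Third, apply Fischer's inequality \eqref{Fischer} to the positive semidefinite block matrix $\widetilde{H}$ partitioned into $k\times k$ blocks of size $n\times n$:
\[ \prod_{l=1}^{k}\det G_{ll}\ge \det \widetilde{H}=\det H. \]
Chaining these two inequalities gives exactly \eqref{trdet1}.

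The only real content is the identification $\widetilde{H}\cong H$ up to a permutation similarity; everything else is AM--GM and Fischer. I expect this permutation-similarity step to be the main (though mild) obstacle to verify cleanly, since it requires tracking indices carefully to confirm that the rearrangement $(i,l)\leftrightarrow(l,i)$ sending the $((i,l),(j,m))$ entry of $H$ to the $((l,i),(m,j))$ entry of $\widetilde{H}$ is indeed realized by conjugation by a permutation matrix, so that positive semidefiniteness and the determinant are preserved. Once this is checked, the rest of the argument is immediate and avoids any of the tensor product or Grassmann product machinery used elsewhere.
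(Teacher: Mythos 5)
Your proposal is correct and takes essentially the same route as the paper's proof: the paper likewise expands $\mathrm{tr}(\mathrm{det}_1 H)$ as $\sum_{l=1}^k \det G_{ll}$, applies AM--GM to these nonnegative numbers, and then invokes Fischer's inequality on $\widetilde{H}$ together with the permutation similarity $\widetilde{H}\cong H$ (established earlier in the paper) to get $\prod_{l=1}^k \det G_{ll}\ge \det\widetilde{H}=\det H$. The permutation-similarity step you flag as the main thing to verify is exactly the paper's Theorem 2.4, proved there by writing $H=\sum_i A_i\otimes B_i$ and using $\widetilde{A\otimes B}=B\otimes A$.
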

We will present an alternative proof later.

The paper is organized as follows. 
In Section 2, we shall 
present two alternative simple proofs for Fiedler and Markham's inequality (\ref{eqfm}) 
 and Choi's inequality (\ref{trdet1}), and then the equivalent relations between
partial traces and partial determinants are drawn. 
In Section 3, we shall give two extensions of partial determinant, 
and some related inequalities are included.

\section{Alternative proofs for (\ref{eqfm}) and (\ref{trdet1})}

If $A=[a_{ij}]$ is of order $m\times n$ and $B$ is $s\times t$, the tensor product of $A,B$, 
denoted by $A\otimes B$, is an $ms\times nt$ matrix, 
partitioned into $m\times n$ block matrix with the $(i,j)$ block the $s\times t$ matrix $a_{ij}B$. 
Let $\otimes^r A=A\otimes \cdots \otimes A$ be the $r$-fold tensor power of $A$, 
and we denote by $\wedge^r A$ the $r$-th Grassmann power (\cite[pp. 16-19]{Bhatia}) of $A$, 
which  is the same as the $r$-th multiplicative compound matrix of $A$, 
and also is a restriction of $\otimes^r A$. 
There are some basic properties of the tensor product, 
we  briefly list some items below.

\begin{proposition}\label{prop}
Let $A, B, C$ be matrices of appropriate sizes. Then \\
1. $(A\otimes B)\otimes C =A\otimes (B\otimes C)$.\\
2. $(A\otimes B)(C\otimes D)=(AC)\otimes (BD)$. \\
3. $(A\otimes B)^T=A^T\otimes B^T$. \\
4. $(A\otimes B)^{-1}=A^{-1}\otimes B^{-1}$ if $A$ and $B$ are invertible. \\
Furthermore, if $A,B,C$ are positive semidefinite matrices, then \\
5. $A\otimes B$ is positive semidefinite. \\
6. If $A\ge B$, then $A\otimes C\ge B\otimes C$. \\
7. $\otimes^r (A+B)\ge \otimes^r A+\otimes^r B$ for all positive integer $r$.
\end{proposition}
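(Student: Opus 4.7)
The plan is to verify the seven items largely by direct block-level inspection, using the definition of the Kronecker product recalled just before the proposition (namely, $A\otimes B$ has $(i,j)$-block equal to $a_{ij}B$). For item~1, fixing outer indices and regrouping shows that both $(A\otimes B)\otimes C$ and $A\otimes(B\otimes C)$ carry the scalar entry $a_{ij}b_{kl}c_{pq}$ at the appropriately indexed positions, so associativity is a matter of reindexing. For item~2, the $(i,j)$-block of $(A\otimes B)(C\otimes D)$ is
\[ \sum_k (a_{ik}B)(c_{kj}D) = \Bigl(\sum_k a_{ik}c_{kj}\Bigr)(BD) = (AC)_{ij}(BD), \]
which is exactly the $(i,j)$-block of $(AC)\otimes(BD)$. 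Item~3 is immediate from the block form, since transposing the block matrix $[a_{ij}B]$ and taking the transpose of each block produces $[a_{ji}B^T]=A^T\otimes B^T$. Item~4 follows from item~2 applied to $(A\otimes B)(A^{-1}\otimes B^{-1})=I\otimes I=I$.

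For items~5 and~6, I would use that every positive semidefinite matrix admits a factorization $A=X^*X$, $B=Y^*Y$. By items~2 and~3,
\[ A\otimes B=(X^*X)\otimes(Y^*Y)=(X\otimes Y)^*(X\otimes Y)\ge 0, \]
giving item~5. For item~6, apply item~5 to the pair $A-B\ge 0$ and $C\ge 0$; the bilinearity of $\otimes$ (a direct consequence of the block formula) then yields $A\otimes C-B\otimes C=(A-B)\otimes C\ge 0$.

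Item~7 is the only one with genuine content. I would expand, by iterated bilinearity,
\[ \otimes^r(A+B)=\sum_{E\in\{A,B\}^r} E_1\otimes E_2\otimes\cdots\otimes E_r. \]
Iterating item~5, each of the $2^r$ tensor summands on the right is positive semidefinite. Two of these summands are $\otimes^r A$ (all entries $A$) and $\otimes^r B$ (all entries $B$); the remaining $2^r-2$ terms are each positive semidefinite, so their sum is positive semidefinite, and therefore
\[ \otimes^r(A+B)-\otimes^r A-\otimes^r B\ge 0, \]
as required.

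I do not anticipate a serious obstacle; the proposition is essentially a catalogue of standard facts. The only mildly delicate point is justifying the multilinear expansion used in items~6 and~7, but this reduces to the distributive identity $(X+Y)\otimes Z=X\otimes Z+Y\otimes Z$ (and the symmetric version in the second factor), which is immediate from the block formula $a_{ij}B$ together with the linearity of each entry in $A$.
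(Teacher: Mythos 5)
Your proofs are all correct. Note, though, that the paper itself offers no proof of this proposition: it is presented as a catalogue of standard facts about the Kronecker product, so there is no argument to compare yours against. Your verifications are the standard ones --- block-level computation for items 1--4, the factorization $A=X^*X$, $B=Y^*Y$ combined with the mixed-product rule for item 5, the reduction of item 6 to item 5 via $(A-B)\otimes C\ge 0$, and the multilinear expansion of $\otimes^r(A+B)$ into $2^r$ positive semidefinite summands for item 7 --- and they are exactly how one would fill in the omitted details. One small point of precision: since the matrices are complex and positive semidefiniteness concerns the conjugate transpose, the identity you actually invoke in item 5 is $(X\otimes Y)^*=X^*\otimes Y^*$, which is the Hermitian-adjoint analogue of item 3 rather than item 3 itself; it follows from the same block computation, but it is worth stating explicitly.
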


\begin{lemma}\label{tr1tr2}
For $H\in \mathbb{M}_n(\mathbb{M}_k)$, we have 
$\mathrm{tr_1}\widetilde{H}=\mathrm{tr}_2 H$ and $\mathrm{det}_1\widetilde{H}=\mathrm{det}_2H$.
\end{lemma}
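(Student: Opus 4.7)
The plan is to prove both identities by direct index bookkeeping. Since both $\mathrm{tr}_1$ and $\mathrm{det}_1$ are defined entrywise in terms of the scalar entries $h_{l,m}^{i,j}$, the whole argument reduces to unpacking the definition of $\widetilde{H}$ and recognizing that passing from $H$ to $\widetilde{H}$ interchanges the roles of the ``outer'' index pair $(i,j)$ and the ``inner'' index pair $(l,m)$. Once this swap is made explicit, the two identities become tautologies of the definitions of $\mathrm{tr}_1,\mathrm{tr}_2,\mathrm{det}_1,\mathrm{det}_2$.

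First I would write $\widetilde{H}=[\widetilde{H}_{lm}]_{l,m=1}^k \in \mathbb{M}_k(\mathbb{M}_n)$ with blocks $\widetilde{H}_{lm}=[h_{l,m}^{i,j}]_{i,j=1}^n \in \mathbb{M}_n$. Applying the visualized form of the first partial trace to $\widetilde{H}$ gives
\[
\mathrm{tr}_1 \widetilde{H} = \sum_{l=1}^{k} \widetilde{H}_{ll} = \sum_{l=1}^{k}\bigl[h_{l,l}^{i,j}\bigr]_{i,j=1}^n = \Bigl[\sum_{l=1}^{k} h_{l,l}^{i,j}\Bigr]_{i,j=1}^n = \bigl[\mathrm{tr}\, H_{ij}\bigr]_{i,j=1}^n = \mathrm{tr}_2 H,
\]
which settles the first identity.

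For the second identity, I would apply Choi's definition of $\mathrm{det}_1$ to $\widetilde{H}$. By construction, the $(l,m)$-block of $\widetilde{H}$ has $(i,j)$-entry equal to $h_{l,m}^{i,j}$, so if we denote this scalar by $\widetilde{h}^{\,l,m}_{i,j}$ (with the roles of outer and inner indices swapped compared to $H$), then
\[
\widetilde{h}^{\,l,m}_{i,j}=h_{l,m}^{i,j}.
\]
The definition of $\mathrm{det}_1$ applied to a member of $\mathbb{M}_k(\mathbb{M}_n)$ yields an $n\times n$ matrix whose $(i,j)$ entry is $\det \bigl[\widetilde{h}^{\,l,m}_{i,j}\bigr]_{l,m=1}^{k}$. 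Substituting the identification above gives $\bigl[\widetilde{h}^{\,l,m}_{i,j}\bigr]_{l,m=1}^{k}=\bigl[h_{l,m}^{i,j}\bigr]_{l,m=1}^{k}=H_{ij}$, hence
\[
\mathrm{det}_1 \widetilde{H} = \bigl[\det H_{ij}\bigr]_{i,j=1}^n = \mathrm{det}_2 H.
\]

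The only real obstacle is notational: the statement is true essentially by construction, but one has to be careful to distinguish the two index pairs and to see that the tilde operation is precisely the involution that swaps them. Once that is recognized, no computation beyond reading the definitions is required.
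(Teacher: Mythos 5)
Your proof is correct; the paper itself only says the lemma ``is straightforward,'' and your index bookkeeping is exactly the intended verification. Both identities are confirmed precisely as you compute them: the tilde operation swaps the outer pair $(i,j)$ with the inner pair $(l,m)$, so $\mathrm{tr}_1\widetilde{H}=\sum_l G_{ll}=[\mathrm{tr}\,H_{ij}]_{i,j=1}^n$ and $\mathrm{det}_1\widetilde{H}=[\det H_{ij}]_{i,j=1}^n$ follow immediately from the definitions.
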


\begin{proof}
It is straightforward.
\end{proof}

\begin{lemma} \label{lem3}
For $A\in \mathbb{M}_n$ and $B\in \mathbb{M}_k$, there exists a permutation matrix $P(n,k)$ 
of order $nk$ depending only on $n,k$ such that 
$\widetilde{A\otimes B}=P(n,k)^T(A\otimes B) P(n,k)$. 
\end{lemma}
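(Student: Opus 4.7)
The plan is to first unwind the definition of $\widetilde{\,\cdot\,}$ applied to $A\otimes B$ and identify the result with $B\otimes A$, and then invoke the classical commutation (``perfect shuffle'') permutation that interchanges the two factors of a Kronecker product.

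First, I would read off the entries of $H=A\otimes B$: by the definition of the tensor product, the $(i,j)$ block of $H$ equals $a_{ij}B$, so $h_{l,m}^{i,j}=a_{ij}b_{lm}$. Applying the tilde operation, the $(l,m)$ outer block of $\widetilde{A\otimes B}$ has $(i,j)$-entry $h_{l,m}^{i,j}=b_{lm}a_{ij}$, i.e.\ it is $b_{lm}A$. Therefore
\[ \widetilde{A\otimes B}=B\otimes A, \]
and the whole statement is reduced to exhibiting a permutation matrix $P(n,k)$ with $P(n,k)^{T}(A\otimes B)P(n,k)=B\otimes A$.

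Next I would produce $P(n,k)$ explicitly as the commutation matrix on the standard basis: define $P(n,k)$ by $P(n,k)\bigl(e_{i}^{(n)}\otimes e_{l}^{(k)}\bigr)=e_{l}^{(k)}\otimes e_{i}^{(n)}$ for $1\le i\le n$ and $1\le l\le k$, equivalently by $P(n,k)(x\otimes y)=y\otimes x$ for all $x\in\mathbb{C}^{n}$, $y\in\mathbb{C}^{k}$. This is clearly a permutation matrix depending only on $n$ and $k$, and its transpose acts by $P(n,k)^{T}(u\otimes v)=v\otimes u$ for $u\in\mathbb{C}^{k}$, $v\in\mathbb{C}^{n}$. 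A direct computation on simple tensors then gives
\[ P(n,k)^{T}(A\otimes B)P(n,k)(u\otimes v)=P(n,k)^{T}\bigl((Av)\otimes(Bu)\bigr)=(Bu)\otimes(Av)=(B\otimes A)(u\otimes v), \]
and extending by linearity yields $P(n,k)^{T}(A\otimes B)P(n,k)=B\otimes A=\widetilde{A\otimes B}$.

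The only work beyond definitions is the verification on simple tensors in the last display, which is purely a book-keeping step; no genuine obstacle is expected, as the lemma is essentially a relabelling statement that identifies the tilde operation on a Kronecker product with the factor-swapping commutation similarity.
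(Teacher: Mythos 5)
Your proposal is correct and follows essentially the same route as the paper: both first compute that $\widetilde{A\otimes B}=B\otimes A$ directly from the definition, and then reduce the claim to the permutation similarity of $B\otimes A$ and $A\otimes B$. The only difference is that the paper cites this similarity as a known fact from the literature, whereas you construct the commutation (perfect shuffle) matrix explicitly and verify its action on simple tensors, which is a self-contained but standard completion of the same argument.
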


\begin{proof}
Let $A=[a_{ij}]_{i,j=1}^n$ and $B=[b_{ij}]_{i,j=1}^k$. Since 
\[ A\otimes B= \left[a_{ij}B\right]_{i,j=1}^n
=\left[ \left[a_{ij}b_{lm}\right]_{l,m=1}^k \right]_{i,j=1}^n. \]
Therefore 
\[ \widetilde{A\otimes B} =\left[ \left[a_{ij}b_{lm}\right]_{i,j=1}^n \right]_{l,m=1}^k 
=\left[b_{lm}A\right]_{l,m=1}^k =B\otimes A.\]
Note that $B\otimes A$ is permutationnally similar to $A\otimes B$, see \cite[p. 40]{Zhan}, 
then there exists a permutation matrix $P(n,k)$ depending on $n,k$ such that 
\[ \widetilde{A\otimes B}=P(n,k)^T(A\otimes B) P(n,k). \]
The result follows. 
\end{proof}

\begin{theorem}
For ${ H}=[H_{ij}]_{i,j=1}^n \in \mathbb{M}_n(\mathbb{M}_k)$, 
$\widetilde{H}$ is permutationally similar to ${H}$.  
\end{theorem}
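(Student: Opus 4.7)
The plan is to leverage Lemma \ref{lem3} together with the linearity of the tilde operation $H \mapsto \widetilde{H}$. The key observation is that the permutation matrix $P(n,k)$ appearing in Lemma \ref{lem3} is the same for every choice of $A \in \mathbb{M}_n$ and $B \in \mathbb{M}_k$, since it depends only on the dimensions $n$ and $k$. This uniformity is exactly what allows a pointwise identity on elementary tensors to be promoted to an identity on all of $\mathbb{M}_n(\mathbb{M}_k)$.

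First I would expand an arbitrary $H \in \mathbb{M}_n(\mathbb{M}_k)$ in the standard tensor basis. Writing $E_{ij}^{(n)}$ and $E_{lm}^{(k)}$ for the matrix units in $\mathbb{M}_n$ and $\mathbb{M}_k$ respectively, one has
\[
H \;=\; \sum_{i,j=1}^{n}\sum_{l,m=1}^{k} h_{l,m}^{i,j}\, E_{ij}^{(n)} \otimes E_{lm}^{(k)}.
\]
Directly from the definition of $\widetilde{H}$, the tilde operation is $\mathbb{C}$-linear in $H$, so
\[
\widetilde{H} \;=\; \sum_{i,j=1}^{n}\sum_{l,m=1}^{k} h_{l,m}^{i,j}\, \widetilde{E_{ij}^{(n)} \otimes E_{lm}^{(k)}}.
\]

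Next I would apply Lemma \ref{lem3} to each elementary tensor $E_{ij}^{(n)} \otimes E_{lm}^{(k)}$, obtaining
\[
\widetilde{E_{ij}^{(n)} \otimes E_{lm}^{(k)}} \;=\; P(n,k)^{T}\bigl(E_{ij}^{(n)} \otimes E_{lm}^{(k)}\bigr) P(n,k),
\]
with the same permutation matrix $P(n,k)$ for every index quadruple $(i,j,l,m)$. Substituting this back and pulling $P(n,k)^T$ and $P(n,k)$ out of the sum yields
\[
\widetilde{H} \;=\; P(n,k)^{T}\!\left(\sum_{i,j,l,m} h_{l,m}^{i,j}\, E_{ij}^{(n)}\otimes E_{lm}^{(k)}\right)\! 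P(n,k) \;=\; P(n,k)^{T} H\, P(n,k),
\]
which is the desired permutation similarity.

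The only genuinely delicate point is confirming that the permutation matrix in Lemma \ref{lem3} really is independent of $A$ and $B$; the remainder of the argument is essentially bookkeeping. Once that independence is in hand, the result drops out by linearity without further computation, and as a corollary one sees that $\widetilde{H}$ inherits from $H$ all spectrum-based properties (eigenvalues, determinant, trace, and positive semidefiniteness), which will be useful in the subsequent applications to partial determinants.
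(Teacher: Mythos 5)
Your proposal is correct and follows essentially the same route as the paper: the paper also writes $H$ as a sum of elementary tensors $\sum_i A_i\otimes B_i$, invokes the linearity of the tilde map, and applies Lemma \ref{lem3} term by term with the single permutation $P(n,k)$. Your only variation is using the explicit matrix-unit basis rather than a generic decomposition, which changes nothing essential.
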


\begin{proof}
Here we present a   short proof which is quite different from that in  \cite{Choi17}. 
We first observe a known fact, for any ${H}\in \mathbb{M}_n(\mathbb{M}_k)$, 
we may write ${H}=\sum_{i=1}^m A_i\otimes B_i$ 
for some $A_i\in \mathbb{M}_n,B_i\in \mathbb{M}_k$ and some positive integer $1\le m\le n^2$. 
By Lemma \ref{lem3}, there is a permutation matrix $P(n,k)$ such that 
\[ \widetilde{H}=\sum\limits_{i=1}^m \widetilde{A_i\otimes B_i} =
\sum\limits_{i=1}^m P(n,k)^T(A_i\otimes B_i)P(n,k)=P(n,k)^T HP(n,k), \]
as desired.
\end{proof}

\noindent
{\bf Remark}~
By applying Fischer's inequality (\ref{Fischer}) to $\widetilde{H}$, we get 
\begin{equation}\label{gll}
 \det H=\det \widetilde{H}\le \prod\limits_{l=1}^k \det G_{ll}.
\end{equation}
The  inequality (\ref{gll}) is proved by using Koteljanskii's inequality in \cite{Choi17}.
\medskip

We shall give  new short proofs of (\ref{eqfm}) and (\ref{trdet1}) next. 

\begin{proofof3}  
Since $H$ is positive semidefinite, so is $\widetilde{H}$, 
then the diagnal block matrices $G_{ll}$ are also positive semidifinite. 
By Fan-Ky's inequality \cite[p. 488]{Horn}, we have 
\[ \det \left(\sum\limits_{l=1}^k G_{ll} \right) \ge 
k^n\sqrt[k]{\prod\limits_{l=1}^k \det G_{ll}}. \]
By Lemma \ref{tr1tr2} and Fischer's inequality, we obtain
\[ \left( \frac{\det (\mathrm{tr}_2H)}{k^n}\right)^k
=\left( \frac{\det (\mathrm{tr}_1\widetilde{H})}{k^n}\right)^k 
\ge \prod\limits_{l=1}^k \det G_{ll}\ge \det \widetilde{H}=\det H.  \]
We get the result.
\end{proofof3}

\begin{proofof5} 
As  the diagnal block matrices $G_{ll}$ are positive semidefinite, 
by AM-GM inequality, we get 
\[ \frac{1}{k}\sum\limits_{l=1}^k\det G_{ll} \ge \sqrt[k]{\prod\limits_{l=1}^k\det G_{ll}}. \]
Combining Lemma \ref{tr1tr2} and (\ref{gll}), it yields 
\[ \left( \frac{\mathrm{tr}(\mathrm{det}_1 H)}{k}\right)^k
=\left( \frac{\mathrm{tr}(\mathrm{det}_2 \widetilde{H})}{k}\right)^k
\ge \prod\limits_{l=1}^k \det G_{ll}\ge \det \widetilde{H}=\det H.  \]
\end{proofof5}

In the above proofs, we actually 
use the symmetry of definitions of $\mathrm{tr}_1$ and $\mathrm{tr}_2$, 
$\mathrm{det}_1$ and $\mathrm{det}_2$. 
 As the byproducts of our argument, we have the following propositions 
by a trivial analysis. We omit the details  here.

\begin{proposition}
Let  $H\in \mathbb{M}_n(\mathbb{M}_k)$ be positive semidefinite. 
The following two inequalities are equivalent. 
\begin{align} \left( \frac{\det (\mathrm{tr}_1H)}{n^k}\right)^n \ge \det H,\\
\left( \frac{\det (\mathrm{tr}_2H)}{k^n}\right)^k \ge \det H. 
\end{align}
\end{proposition}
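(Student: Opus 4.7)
The plan is to exploit the symmetry already established between $H$ and $\widetilde{H}$: the operation $H\mapsto\widetilde{H}$ interchanges the outer and inner block sizes (sending $\mathbb{M}_n(\mathbb{M}_k)$ to $\mathbb{M}_k(\mathbb{M}_n)$) while preserving positive semidefiniteness and determinant, and it swaps $\mathrm{tr}_1$ with $\mathrm{tr}_2$. So the two inequalities are really the same statement applied to $H$ and to $\widetilde{H}$, respectively, with the roles of $n$ and $k$ exchanged.

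First I would observe that Theorem 2.4 shows $\widetilde{H}=P(n,k)^TH\,P(n,k)$ for a permutation matrix, so $\widetilde{H}$ is positive semidefinite whenever $H$ is, and $\det\widetilde{H}=\det H$. Next I would record the companion identity to Lemma~\ref{tr1tr2}: since the tilde construction is involutive (up to permutation similarity) and symmetric in its two index layers, one has $\mathrm{tr}_2\widetilde{H}=\mathrm{tr}_1 H$ in addition to the stated $\mathrm{tr}_1\widetilde{H}=\mathrm{tr}_2 H$. This can be checked in one line from the explicit description of the entries of $\widetilde{H}$.

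With these ingredients in place, assume the second inequality holds for every positive semidefinite element of $\mathbb{M}_n(\mathbb{M}_k)$, for every choice of $n,k$. Applying it to $\widetilde{H}\in\mathbb{M}_k(\mathbb{M}_n)$ (which is positive semidefinite of the appropriate block shape, now with outer size $k$ and inner size $n$) yields
\[
\left(\frac{\det(\mathrm{tr}_2\widetilde{H})}{n^k}\right)^n\ge \det\widetilde{H}.
\]
Substituting $\mathrm{tr}_2\widetilde{H}=\mathrm{tr}_1 H$ and $\det\widetilde{H}=\det H$ gives exactly the first inequality for $H$. The reverse implication is identical, applying the first inequality to $\widetilde{H}$ and translating back via the same two identities.

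The only real obstacle is a bookkeeping one: making sure the exponents and the powers of $n$ and $k$ line up correctly after the swap, and that the inequality is understood as a universally quantified statement over all admissible pairs $(n,k)$ (so that we are allowed to instantiate it at $\widetilde{H}$, whose outer and inner dimensions are exchanged). Once that is made explicit, the argument is a two-line substitution.
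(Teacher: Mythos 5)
Your proof is correct and is precisely the symmetry argument the paper has in mind: the authors omit the details, stating only that the proposition is a byproduct of the $\mathrm{tr}_1$/$\mathrm{tr}_2$ symmetry under the tilde map, and your verification that $\mathrm{tr}_2\widetilde{H}=\mathrm{tr}_1H$, $\det\widetilde{H}=\det H$, and that instantiating each inequality at $\widetilde{H}\in\mathbb{M}_k(\mathbb{M}_n)$ yields the other is exactly that argument carried out. You are also right to flag that the equivalence must be read as one between universally quantified statements (over all $H$ and all pairs $(n,k)$), which is the intended reading.
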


\begin{proposition}
Let  $H\in \mathbb{M}_n(\mathbb{M}_k)$ be positive semidefinite. 
The following two inequalities are equivalent. 
\begin{align}
\left( \frac{\mathrm{tr}(\mathrm{det}_1 H)}{k}\right)^k\ge \det H,\\
\left( \frac{\mathrm{tr}(\mathrm{det}_2 H)}{n}\right)^n\ge \det H. 
\end{align}
\end{proposition}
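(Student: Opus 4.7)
The plan is to exploit the $H \leftrightarrow \widetilde{H}$ correspondence already established. Two facts do all the work: first, the theorem just proved shows that $\widetilde{H}$ is permutationally similar to $H$, hence $\det\widetilde{H}=\det H$; second, Lemma \ref{tr1tr2} gives $\mathrm{det}_1\widetilde{H}=\mathrm{det}_2H$, and since the tilde operation is an involution ($\widetilde{\widetilde{H}}=H$) one also obtains $\mathrm{det}_2\widetilde{H}=\mathrm{det}_1H$.

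To prove that the first inequality implies the second, I would read the first as a universal statement: valid for every positive semidefinite block matrix in $\mathbb{M}_N(\mathbb{M}_K)$ at arbitrary sizes $N,K$. Given a positive semidefinite $H\in \mathbb{M}_n(\mathbb{M}_k)$, its partner $\widetilde{H}$ lies in $\mathbb{M}_k(\mathbb{M}_n)$ and is still positive semidefinite (being a permutation-congruence of $H$). Applying the assumed inequality to $\widetilde{H}$, with the roles of $n$ and $k$ swapped, yields
\[
\left(\frac{\mathrm{tr}(\mathrm{det}_1\widetilde{H})}{n}\right)^n \ge \det\widetilde{H}.
\]
Substituting $\mathrm{det}_1\widetilde{H}=\mathrm{det}_2H$ and $\det\widetilde{H}=\det H$ produces the second inequality for $H$. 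The reverse implication is entirely symmetric: apply the second inequality to $\widetilde{H}$ and use $\mathrm{det}_2\widetilde{H}=\mathrm{det}_1H$ together with $\det\widetilde{H}=\det H$ to recover the first inequality.

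The main obstacle is really only notational, namely tracking carefully which of $n$ and $k$ plays each role after the tilde swap, because the constants $k^k$ and $n^n$ are no longer symmetric placeholders once the indices are interchanged. There is no additional analytic content beyond what the preceding lemma and the permutational-similarity theorem already supply, which is consistent with the authors' remark that the proof follows by a trivial analysis. An identical argument, with $\mathrm{det}_i$ replaced by $\mathrm{tr}_i$ and using $\mathrm{tr}_1\widetilde{H}=\mathrm{tr}_2H$ from Lemma \ref{tr1tr2}, establishes the previous proposition on $\mathrm{tr}_1H$ versus $\mathrm{tr}_2H$.
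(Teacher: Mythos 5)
Your proposal is correct and follows exactly the route the paper intends: the authors omit the details, noting only that the equivalence is a byproduct of the symmetry between $\mathrm{det}_1$ and $\mathrm{det}_2$ under the $H\mapsto\widetilde{H}$ correspondence, which is precisely the argument you give (including the correct reading of the inequalities as universally quantified over all block sizes, and the use of $\widetilde{\widetilde{H}}=H$ to get $\mathrm{det}_2\widetilde{H}=\mathrm{det}_1H$). Nothing is missing.
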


\section{Partial determinant inequalities}

If $A$ is positive semidefinite, then we write $A\ge 0$, and 
for two Hermitian matrices $A,B\in \mathbb{M}_n$, 
the symbol $A\ge B$ means that $A-B\ge 0$. 
In \cite{Lin16}, it is shown that if $A,B\in \mathbb{M}_n(\mathbb{M}_k)$ are positive 
semidefinite, then 
\begin{equation}\label{det2} 
\mathrm{det}_2 (A+B) \ge \mathrm{det}_2 A +\mathrm{det}_2 B.
\end{equation}
Choi \cite[Corollary 9]{Choi17} gave the corresponding complement as

\begin{equation}\label{det1}
 \mathrm{det}_1 (A+B) \ge \mathrm{det}_1 A +\mathrm{det}_1 B. 
\end{equation}
In what follows, we will   extend  (\ref{det2}) and (\ref{det1}) to a more generalized setting. 

\begin{lemma} \label{lem10}
Let $A=[A_{ij}]_{i,j=1}^n\in \mathbb{M}_n(\mathbb{M}_k)$. 
Then $[\otimes^r A_{ij}]_{i,j=1}^n $ is a principal submatrix of $\otimes^r A$.
\end{lemma}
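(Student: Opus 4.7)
The plan is to prove the statement by indexing the rows and columns of the matrices explicitly and identifying the desired matrix as the principal submatrix of $\otimes^r A$ on a carefully chosen ``diagonal'' index set.

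First, I would index the rows and columns of $A \in \mathbb{M}_n(\mathbb{M}_k)$ by pairs $(i,l)$ with $1 \le i \le n$ and $1 \le l \le k$, so that $A_{(i,l),(j,m)} = (A_{ij})_{l,m}$. Then by the definition of the tensor product, the rows and columns of $\otimes^r A$ are indexed by $r$-tuples of such pairs, and one has the explicit formula
$$ (\otimes^r A)_{((i_1,l_1),\ldots,(i_r,l_r)),\,((j_1,m_1),\ldots,(j_r,m_r))} \;=\; \prod_{s=1}^r (A_{i_s j_s})_{l_s, m_s}. $$
Similarly, the rows and columns of $[\otimes^r A_{ij}]_{i,j=1}^n$ are naturally indexed by pairs $(i,(l_1,\ldots,l_r))$ with $1 \le i \le n$ and each $l_s \in \{1,\ldots,k\}$, and the entry at position $((i,(l_1,\ldots,l_r)),(j,(m_1,\ldots,m_r)))$ equals $\prod_{s=1}^r (A_{ij})_{l_s,m_s}$.

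Second, I would introduce the ``diagonal'' subset of multi-indices
$$ S \;=\; \bigl\{\,((i,l_1),(i,l_2),\ldots,(i,l_r)) \;:\; 1 \le i \le n,\ 1 \le l_s \le k\,\bigr\}, $$
consisting of those $r$-tuples whose first coordinates all coincide. A direct count gives $|S| = n k^r$, matching the order of $[\otimes^r A_{ij}]_{i,j=1}^n$. Since the same set $S$ is used for both rows and columns, the resulting extraction from $\otimes^r A$ is genuinely a principal submatrix.

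Third, I would compare entries: for $\alpha = ((i,l_1),\ldots,(i,l_r)) \in S$ and $\beta = ((j,m_1),\ldots,(j,m_r)) \in S$, the $(\alpha,\beta)$ entry of $\otimes^r A$ collapses to $\prod_{s=1}^r (A_{ij})_{l_s, m_s}$ because the first coordinates are constant along each index; this is exactly the corresponding entry of $[\otimes^r A_{ij}]_{i,j=1}^n$ under the natural bijection $\alpha \leftrightarrow (i,(l_1,\ldots,l_r))$. The conclusion follows after fixing any total ordering on $S$ that is compatible with the block structure of $[\otimes^r A_{ij}]_{i,j=1}^n$. No step involves a genuine difficulty; the only real obstacle is multi-index bookkeeping and choosing the ordering on $S$ so that the extracted principal submatrix matches the block presentation of $[\otimes^r A_{ij}]_{i,j=1}^n$ on the nose.
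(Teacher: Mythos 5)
Your proof is correct. It reaches the same conclusion as the paper but organizes the computation differently: you do the verification combinatorially, by writing out the entry formula for $\otimes^r A$ in multi-index notation and checking directly that the principal submatrix on the ``block-diagonal'' index set $S$ (tuples whose block indices all coincide) reproduces $[\otimes^r A_{ij}]_{i,j=1}^n$ entry by entry. The paper instead argues algebraically: it writes $A_{ij}=E_i^*AE_j$ for selection matrices $E_j$ (the $nk\times k$ matrices whose $j$-th block is $I_k$), applies the multiplicativity of the tensor power to get $\otimes^r A_{ij}=(\otimes^r E_i)^*(\otimes^r A)(\otimes^r E_j)$, and assembles these into $E^*(\otimes^r A)E$ with $E=[\otimes^r E_1,\ldots,\otimes^r E_n]$. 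The two arguments select exactly the same rows and columns --- the columns of $\otimes^r E_j$ are precisely the standard basis vectors indexed by your set $S$ with block index $j$ --- so the difference is one of packaging. Your version is more elementary and makes the index set completely explicit (and you are right that one must fix a compatible ordering on $S$ to match the block presentation on the nose); the paper's version is shorter because the identity $(\otimes^r X)(\otimes^r Y)=\otimes^r(XY)$ absorbs all the bookkeeping, and it also sidesteps the paper's slightly loose description of $E$ as a ``permutation matrix'' (it is really a tall selection matrix with orthonormal $0$--$1$ columns).
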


\begin{proof}
Without loss of generality, we may write $A=X^*Y$, where $X,Y$ are $nk\times nk$. 
Now we partition $X=(X_1,X_2,\ldots ,X_n)$ and $Y=(Y_1,Y_2,\ldots ,Y_n)$ 
with each $X_i,Y_i$ is an $nk\times k$ complex matrix. 
Under this partition, we see that $A_{ij}=X_i^*Y_j$. 
Also we have $Y_j=YE_j$, where $E_j$ is a suitable $nk\times k$ matrix 
such that its $j$-th block  is extractly $I_k$ and otherwise $0$.  So we obtain
\[ \otimes^r A_{ij}=\otimes^r (X_i^*Y_j )= \otimes^r (E_i^*X^*YE_j)
=(\otimes^r E_i)^* (\otimes^r (X^*Y)) (\otimes^r E_j).\]
In other words, 
\[ [\otimes^r A_{ij}]_{i.j=1}^n =E^*(\otimes^rA)E, 
\quad E=[\otimes^r E_1,\otimes^r E_2,\ldots ,\otimes^r E_n]. \]
It is easy to verify that $E$ is a permutation matrix with $1$ only in diagonal entries.
\end{proof}

\begin{lemma} 
(\cite[Theorem 2.1]{BS15}) 
\label{lem32}
Let $A,B,C$ be positive semidefinite matrices of same size. 
Then for every positive integer $r$, we have
\begin{equation} \label{eq13}
\begin{aligned}
& \otimes^r (A+B+C) +\otimes^r A +\otimes^r B +\otimes^r C \\ 
&\quad \ge \otimes^r (A+B) +\otimes^r (A+C) +\otimes^r (B+C).
\end{aligned} \end{equation}
\end{lemma}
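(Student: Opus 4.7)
The plan is to reduce the inequality to a pointwise inclusion--exclusion identity on the Boolean lattice of subsets of $\{A,B,C\}$, after which the result falls out from the fact that a tensor product of positive semidefinite matrices is positive semidefinite. For a subset $S \subseteq \{A,B,C\}$, write
$$f(S) := \otimes^r\Bigl(\sum_{X \in S} X\Bigr) = \sum_{(X_1,\ldots,X_r) \in S^r} X_1 \otimes X_2 \otimes \cdots \otimes X_r,$$
where the second equality is just the multilinear expansion of the $r$-fold tensor power. In this notation the inequality to be proved reads
$$f(\{A,B,C\}) - f(\{A,B\}) - f(\{A,C\}) - f(\{B,C\}) + f(\{A\}) + f(\{B\}) + f(\{C\}) \ge 0.$$

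Next I would introduce the refinement
$$g(S) := \sum_{\substack{(X_1,\ldots,X_r) \in S^r \\ \{X_1,\ldots,X_r\} = S}} X_1 \otimes \cdots \otimes X_r,$$
which collects only those ordered sequences whose set of values is exactly $S$. One then has $f(S) = \sum_{T \subseteq S} g(T)$, and inclusion--exclusion on the Boolean lattice (noting that $f(\emptyset) = 0$ since $S^r$ is empty when $S$ is empty and $r \ge 1$) gives
$$g(\{A,B,C\}) = f(\{A,B,C\}) - f(\{A,B\}) - f(\{A,C\}) - f(\{B,C\}) + f(\{A\}) + f(\{B\}) + f(\{C\}).$$
Thus the original inequality is equivalent to $g(\{A,B,C\}) \ge 0$.

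The last step is the easy one: every summand of $g(\{A,B,C\})$ is a tensor product $X_1 \otimes \cdots \otimes X_r$ with each $X_i$ one of $A, B, C$, which is positive semidefinite by repeated application of item 5 of Proposition \ref{prop}, and a sum of positive semidefinite matrices is positive semidefinite. I do not expect a serious obstacle; the only subtle point is that the tensor product is non-commutative, so in expanding $\otimes^r(A+B+C)$ one must track ordered $r$-tuples rather than multisets, but the inclusion--exclusion bookkeeping is insensitive to ordering and goes through verbatim. This is simply the noncommutative analogue of the scalar identity
$$(a+b+c)^r - (a+b)^r - (a+c)^r - (b+c)^r + a^r + b^r + c^r = \sum_{\substack{i+j+k=r \\ i,j,k \ge 1}} \binom{r}{i,j,k}\, a^i b^j c^k,$$
which is manifestly nonnegative for $a,b,c \ge 0$.
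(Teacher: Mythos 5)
Your proof is correct, and it takes a genuinely different route from the paper. The paper argues by induction on $r$: the cases $r=1,2$ hold (with equality), and the inductive step tensors the hypothesis with $(A+B+C)$ and then absorbs the leftover cross terms using the superadditivity $\otimes^m(A+B)\ge \otimes^m A+\otimes^m B$ from Proposition \ref{prop}. You instead expand $\otimes^r$ multilinearly over ordered $r$-tuples of labels and apply M\"obius inversion on the Boolean lattice, identifying the left-hand side minus the right-hand side of (\ref{eq13}) \emph{exactly} as
\[
\sum_{\substack{(X_1,\dots,X_r)\in\{A,B,C\}^r\\ \text{all three labels occur}}} X_1\otimes\cdots\otimes X_r \;\ge\; 0 .
\]
Your bookkeeping is sound: $f(\emptyset)=0$, each tuple in $S^r$ has a unique nonempty value set, and positivity of each summand is item 5 of Proposition \ref{prop}; the one cosmetic caveat is that the $X_i$ should be treated as formal labels rather than matrices (so that coincidences like $A=B$ do not collapse the index set), which does not affect the argument. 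What your approach buys: it is non-inductive, it exhibits the difference of the two sides as an explicit positive sum (so equality holds precisely when that sum vanishes, e.g.\ automatically for $r\le 2$, which explains why the paper's base cases are equalities), and it generalizes verbatim to the $n$-variable Hlawka--Popoviciu inequality $\sum_{S\subseteq[n]}(-1)^{n-|S|}\otimes^r\bigl(\sum_{i\in S}A_i\bigr)\ge 0$, with the two-variable case recovering the superadditivity that the paper's induction takes as an input. What the paper's induction buys is brevity given that superadditivity is already recorded as item 7 of Proposition \ref{prop}, at the cost of hiding the combinatorial identity underneath.
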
 

\begin{proof}
For completeness, we include a proof by induction on $r$. 
The trivial case $r=1$  holds with equality, 
and the case $r=2$ is easy to verify. 
Assume therefore (\ref{eq13}) holds for some $r=m\ge 2$, that is 
\begin{equation*}
\begin{aligned}
& \otimes^m (A+B+C) +\otimes^m A +\otimes^m B +\otimes^m C \\ 
&\quad \ge \otimes^m (A+B) +\otimes^m (A+C) +\otimes^m (B+C).
\end{aligned} \end{equation*}
For $r=m+1$, we have  
\begin{align*}
&\otimes^{m+1}(A+B+C)\\
&\quad = \bigl( \otimes^m(A+B+C) \bigr) \otimes (A+B+C)\\
&\quad \ge \bigl( \otimes^m (A+B) +\otimes^m (A+C) +\otimes^m (B+C) -
\otimes^m A - \otimes^m B - \otimes^m C \bigr) \\
  &\quad\quad\,    \otimes (A+B+C) \\
&\quad = \otimes^{m+1}(A+B) +\otimes^{m+1}(A+C) +\otimes^{m+1}(B+C) \\
&\quad \quad     -\otimes^{m+1}A-\otimes^{m+1}B-\otimes^{m+1}C \\
&\quad \quad + \bigl( \otimes^m(A+B) \bigr) \otimes C + 
\bigl(\otimes^m(A+C)\bigr) \otimes B  + \bigl(\otimes^m(B+C)\bigr) \otimes A \\
&\quad \quad - \bigl(\otimes^m A \bigr) \otimes (B+C) -\bigl(\otimes^m B \bigr) \otimes (A+C) 
- \bigl(\otimes^m C \bigr) \otimes (A+B).
\end{align*}
It remains to show that 
\begin{align*}
&\bigl( \otimes^m(A+B) \bigr) \otimes C + 
\bigl(\otimes^m(A+C)\bigr) \otimes B  + \bigl(\otimes^m(B+C)\bigr) \otimes A \\
&\quad \ge  \bigl(\otimes^m A \bigr) \otimes (B+C)  + \bigl(\otimes^m B \bigr) \otimes (A+C) 
  + \bigl(\otimes^m C \bigr) \otimes (A+B).
\end{align*}
This follows immediately by the superadditivity of 
tensor power, by Proposition \ref{prop}, 
\begin{align*} 
\otimes^m (A+B)&\ge \otimes^m A +\otimes^m B, \\
\otimes^m(A+C) &\ge \otimes^m A +\otimes^m C, \\
\otimes^m(B+C) &\ge \otimes^m B +\otimes^m C. 
\end{align*}
Thus, the desired inequality (\ref{eq13}) holds.
\end{proof}

Tie et al. \cite[Lemma 2.2]{Tie11} established the following tensor product inequality (\ref{eq14}), 
we here demonstrate that it might be actually viewed as a corollary of Lemma \ref{lem32}. 

\begin{corollary} \label{coro33}
Let $A,B,C$ be positive semidefinite matrices of same size. 
Then for each positive integer $r$, we have
\begin{equation}\label{eq14}
\otimes^r (A+B+C) +\otimes^r C \ge \otimes^r (A+C) +\otimes^r (B+C).
\end{equation}
\end{corollary}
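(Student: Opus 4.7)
The plan is to derive the inequality as a one-step consequence of Lemma~\ref{lem32} combined with the superadditivity of the tensor power (item 7 of Proposition~\ref{prop}). Concretely, I would start from the conclusion of Lemma~\ref{lem32} applied to $A,B,C$ and rearrange it to isolate $\otimes^r(A+B+C)$ on one side, namely
\[
\otimes^r (A+B+C) \ge \otimes^r (A+B) +\otimes^r (A+C) +\otimes^r (B+C) - \otimes^r A -\otimes^r B -\otimes^r C.
\]
Adding $\otimes^r C$ to both sides gives
\[
\otimes^r (A+B+C) + \otimes^r C \ge \otimes^r (A+C) +\otimes^r (B+C) + \bigl(\otimes^r (A+B) - \otimes^r A - \otimes^r B\bigr).
\]

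The final step is to discard the parenthesized remainder by showing it is positive semidefinite. This is exactly the superadditivity $\otimes^r(A+B) \ge \otimes^r A + \otimes^r B$ recorded in Proposition~\ref{prop}, which is valid because $A,B\ge 0$. So the bracketed term is $\ge 0$ and can be dropped from the right-hand side, yielding the claimed inequality
\[
\otimes^r (A+B+C) + \otimes^r C \ge \otimes^r (A+C) + \otimes^r (B+C).
\]

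I do not foresee a genuine obstacle here: the whole content of Corollary~\ref{coro33} is that Lemma~\ref{lem32} is strictly stronger than~\eqref{eq14}, and the passage between them is a single application of superadditivity. The only thing worth being careful about is the direction of the inequality when terms are moved across the $\ge$ sign (which is legitimate since all matrices in sight are Hermitian and the Löwner order is preserved under addition of Hermitian matrices). The proof is therefore essentially two lines, and no induction on $r$ is needed beyond the one already carried out inside Lemma~\ref{lem32}.
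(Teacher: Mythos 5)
Your proposal is correct and is essentially identical to the paper's own proof: both rearrange the Hlawka-type inequality of Lemma~\ref{lem32} and then discard the leftover term $\otimes^r(A+B)-\otimes^r A-\otimes^r B$ via the superadditivity of the tensor power from Proposition~\ref{prop}. No gaps.
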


\begin{proof}
By Lemma \ref{lem32} and Proposition \ref{prop}, we obtain 
\begin{align*}
&\otimes^r (A+B+C)  +\otimes^r  C 
 - (\otimes^r (A+C) +  \otimes^r (B+C) ) \\
& \quad \ge \otimes^r (A+B)-\otimes^r A - \otimes^r B \ge 0. 
\end{align*}
The desired inequality (\ref{eq14}) follows.
\end{proof}

The next result Theorem \ref{thm14}  is an extension of (\ref{det2}) and (\ref{det1}). 

\begin{theorem} \label{thm14}
Let $A,B,C\in \mathbb{M}_n(\mathbb{M}_k)$ be positive semidefinite. Then
\begin{equation} \label{eq17} \begin{aligned} 
 &\mathrm{det}_1(A+B+C) +\mathrm{det}_1A +\mathrm{det}_1B +\mathrm{det}_1 C \\
 & \quad \ge \mathrm{det}_1(A+B) + \mathrm{det}_1 (A+C)+\mathrm{det}_1(B+C), 
\end{aligned}  \end{equation}
and 
\begin{equation} \label{eq18} \begin{aligned} 
& \mathrm{det}_2(A+B+C) +\mathrm{det}_2A +\mathrm{det}_2B +\mathrm{det}_2 C \\
& \quad  \ge \mathrm{det}_2(A+B) + \mathrm{det}_2 (A+C)+\mathrm{det}_2(B+C).
\end{aligned}  \end{equation}
\end{theorem}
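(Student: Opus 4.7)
The idea is to derive (\ref{eq18}) from the tensor power supermodularity (Lemma \ref{lem32}) by a two-step compression down to block-determinant form, and then obtain (\ref{eq17}) from (\ref{eq18}) via the symmetry $H\mapsto \widetilde{H}$.

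For (\ref{eq18}), I would first apply Lemma \ref{lem32} with $r=k$ to the positive semidefinite $A,B,C\in\mathbb{M}_n(\mathbb{M}_k)$, obtaining
\[ \otimes^k(A+B+C)+\otimes^k A+\otimes^k B+\otimes^k C \;\ge\; \otimes^k(A+B)+\otimes^k(A+C)+\otimes^k(B+C). \]
The plan is to compress both sides by one fixed operator $W$ (independent of $A,B,C$) that sends $\otimes^k X$ to $\det_2 X$ for every $X\in\mathbb{M}_n(\mathbb{M}_k)$. Lemma \ref{lem10} supplies a $0/1$ matrix $E$, depending only on $n$ and $k$, with $E^*(\otimes^k X)E=[\otimes^k X_{ij}]_{i,j=1}^n$. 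To further extract $\det X_{ij}$ from the $k^k\times k^k$ block $\otimes^k X_{ij}$, I would use the standard identity
\[ \det M = v^*(\otimes^k M)v,\qquad v=\frac{1}{\sqrt{k!}}\sum_{\sigma\in S_k}\mathrm{sgn}(\sigma)\,e_{\sigma(1)}\otimes\cdots\otimes e_{\sigma(k)}, \]
which expresses the determinant as the $1$-dimensional antisymmetric compression of $\otimes^k M$. Setting $V:=I_n\otimes v$, one gets $V^*[\otimes^k X_{ij}]V=\det_2 X$ block-by-block, and hence $W:=EV$ satisfies $W^*(\otimes^k X)W=\det_2 X$ for every $X$. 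Since $W^*(\cdot)W$ preserves the positive semidefinite ordering, applying it to both sides of the displayed supermodularity yields (\ref{eq18}).

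To deduce (\ref{eq17}) from (\ref{eq18}), I would apply Lemma \ref{tr1tr2} with $\widetilde{H}$ in place of $H$; using $\widetilde{\widetilde{X}}=X$, this gives the companion identity $\det_2\widetilde{X}=\det_1 X$ for every $X\in\mathbb{M}_n(\mathbb{M}_k)$. Because $H\mapsto\widetilde{H}$ is linear and, by the permutational similarity established earlier in Section 2, maps positive semidefinite block matrices to positive semidefinite block matrices, the triple $\widetilde{A},\widetilde{B},\widetilde{C}\in\mathbb{M}_k(\mathbb{M}_n)$ satisfies (\ref{eq18}). Translating each $\det_2\widetilde{(\cdot)}$ back to $\det_1(\cdot)$ then gives exactly (\ref{eq17}).

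The main technical hurdle is assembling the right compression $W$ and verifying uniformity: one has to check that a single operator $W=EV$, independent of the argument $X$, simultaneously carries $\otimes^k X$ to $\det_2 X$ for every $X$. Once this composite compression is in place, both (\ref{eq18}) and (\ref{eq17}) follow by a one-line application of Lemma \ref{lem32} combined with the standard fact that PSD order is preserved under conjugation.
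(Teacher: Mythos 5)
Your proposal is correct and follows essentially the same route as the paper: apply the supermodularity of tensor powers (Lemma \ref{lem32}) with $r=k$, compress to $[\otimes^k X_{ij}]_{i,j=1}^n$ via Lemma \ref{lem10}, and then restrict to the antisymmetric tensors — your explicit rank-one antisymmetric compression $v^*(\otimes^k M)v=\det M$ is just the concrete form of the paper's "restriction to antisymmetric tensors," since $\wedge^k M=\det M$ for $M\in\mathbb{M}_k$. The deduction of (\ref{eq17}) from (\ref{eq18}) via the map $X\mapsto\widetilde{X}$ likewise matches the paper's one-line remark about exchanging the roles of $A$ and $\widetilde{A}$.
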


\begin{proof}
We only prove (\ref{eq18}), 
and (\ref{eq17}) can be proved by exchanging the role of $\widetilde{A}$ and $A$. 
By Lemma \ref{lem32}, we have 
\begin{equation*} 
\begin{aligned}
& \otimes^r (A+B+C) +\otimes^r A +\otimes^r B +\otimes^r C \\ 
&\quad \ge \otimes^r (A+B) +\otimes^r (A+C) +\otimes^r (B+C).
\end{aligned} \end{equation*}
By Lemma \ref{lem10}, it yields 
\begin{align*} 
 & [\otimes^r (A_{ij}+B_{ij}+C_{ij})]_{i,j=1}^n 
+ [\otimes^r A_{ij}]_{i,j=1}^n + [\otimes^r B_{ij}]_{i,j=1}^n+[\otimes^r C_{ij}]_{i,j=1}^n \\
 &\quad \ge [\otimes^r (A_{ij}+B_{ij})]_{i,j=1}^n + 
[\otimes^r (A_{ij}+C_{ij})]_{i,j=1}^n + [\otimes^r (B_{ij}+C_{ij})]_{i,j=1}^n. 
\end{align*}
By restricting above inequality to the antisymmetric tensors, one obtains 
\begin{align*}
& [\wedge^r (A_{ij}+B_{ij}+C_{ij})]_{i,j=1}^n 
+ [\wedge^r A_{ij}]_{i,j=1}^n+ [\wedge^r B_{ij}]_{i,j=1}^n +[\wedge^r C_{ij}]_{i,j=1}^n \\
&\quad \ge [\wedge^r (A_{ij}+B_{ij})]_{i,j=1}^n +
[\wedge^r (A_{ij}+C_{ij})]_{i,j=1}^n + [\wedge^r (B_{ij}+C_{ij})]_{i,j=1}^n. 
\end{align*}
The required result (\ref{eq18}) follows by noting that $\det A_{ij}=\wedge^k A_{ij}$. 
\end{proof}

 \begin{corollary} \label{thm12}
Let $A,B,C\in \mathbb{M}_n(\mathbb{M}_k)$ be positive semidefinite. Then
\begin{equation}\label{eqdet1}
 \mathrm{det}_1(A+B+C) +\mathrm{det}_1C\ge \mathrm{det}_1(A+C) +\mathrm{det}_1(B+C),
\end{equation}
and 
\begin{equation}\label{eqdet2}
 \mathrm{det}_2(A+B+C) +\mathrm{det}_2C\ge \mathrm{det}_2(A+C) +\mathrm{det}_2(B+C). 
\end{equation}
\end{corollary}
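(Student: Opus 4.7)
My plan is to derive Corollary \ref{thm12} as a direct consequence of Theorem \ref{thm14}, in exactly the same way that Corollary \ref{coro33} is extracted from Lemma \ref{lem32}. The key observation is that the four-term inequality (\ref{eq17}) (respectively (\ref{eq18})) already contains the desired three-term inequality (\ref{eqdet1}) (respectively (\ref{eqdet2})), with a ``leftover'' expression that can be disposed of by the superadditivity of $\mathrm{det}_1$ and $\mathrm{det}_2$, which was recorded as (\ref{det1}) and (\ref{det2}).

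Concretely, for (\ref{eqdet1}) I would rearrange (\ref{eq17}) as
\begin{align*}
&\mathrm{det}_1(A+B+C) + \mathrm{det}_1 C - \mathrm{det}_1(A+C) - \mathrm{det}_1(B+C) \\
&\quad \ge \mathrm{det}_1(A+B) - \mathrm{det}_1 A - \mathrm{det}_1 B,
\end{align*}
and then invoke (\ref{det1}) to conclude that the right-hand side is $\ge 0$, which yields (\ref{eqdet1}). The inequality (\ref{eqdet2}) is obtained by the same rearrangement of (\ref{eq18}) followed by an application of (\ref{det2}). There is no real obstacle here beyond the bookkeeping: all of the analytic content lives in Theorem \ref{thm14}, whose proof already rests on the tensor-power inequality of Lemma \ref{lem32} and the principal-submatrix representation of Lemma \ref{lem10}; the corollary merely trims one pair of matching terms using a previously proven superadditivity statement, in the same fashion as the tensor-level passage from Lemma \ref{lem32} to Corollary \ref{coro33}.
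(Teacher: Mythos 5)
Your proof is correct, but it is not the route the paper's own proof takes. The paper proves Corollary \ref{thm12} by repeating the machinery of Theorem \ref{thm14} from scratch with Corollary \ref{coro33} in place of Lemma \ref{lem32}: one applies the three-term tensor inequality (\ref{eq14}) to $A,B,C$, compresses to the principal submatrix $[\otimes^r(\cdot)_{ij}]_{i,j=1}^n$ via Lemma \ref{lem10}, and restricts to antisymmetric tensors. Your argument instead takes the already-established four-term inequalities (\ref{eq17}), (\ref{eq18}) and trims them down using the superadditivity statements (\ref{det1}) and (\ref{det2}); the rearrangement is a legitimate manipulation in the Loewner order and the logic is sound. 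In fact this is precisely the alternative derivation the paper records in the remark immediately following the corollary, attributed there to the referee. The trade-off is mild: your route is shorter and avoids redoing the tensor-level argument, but it leans on the external results (\ref{det1}) and (\ref{det2}) quoted from \cite{Lin16} and \cite{Choi17}, whereas the paper's sketched proof is self-contained modulo Corollary \ref{coro33}, which itself rests only on the elementary superadditivity of tensor powers in Proposition \ref{prop}. Either way the corollary follows; your version is arguably the cleaner deduction given that Theorem \ref{thm14} has already been proved.
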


\begin{proof}
Along the similar lines as in Theorem \ref{thm14}, 
it is not difficult to give the proof by applying Corollary \ref{coro33}. 
We leave the details for the reader. 
\end{proof}

\noindent
{\bf Remark}~~
It is worth noting that after finishing the first version of this paper, 
the referee informed the author that 
 (\ref{eqdet1}) and (\ref{eqdet2}) 
might be viewed as a corollary of Theorem \ref{thm14}. 
Since  by Theorem \ref{thm14} and (\ref{det1}), 
\begin{align*}
&\mathrm{det}_1(A+B+C)  +\mathrm{det}_1 C 
 - (\mathrm{det}_1 (A+C) + \mathrm{det}_1(B+C) ) \\
& \quad \ge \mathrm{det}_1(A+B)-\mathrm{det}_1A - \mathrm{det}_1B \ge 0. 
\end{align*}
Therefore, (\ref{eq17}) implies (\ref{eqdet1}). 
Similarly, (\ref{eq18}) implies (\ref{eqdet2}) by using (\ref{det2}). 

In particular, when $n=1$, (\ref{eqdet2}) is the  well-known determinantal inequality: 
\[ \det (A+B+C)+\det C \ge \det (A+C) +\det (B+C).  \]
And (\ref{eq18}) in Theorem \ref{thm14} reduces to the following result:  
\begin{align*} 
& \mathrm{det}(A+B+C) +\mathrm{det}A +\mathrm{det}B +\mathrm{det} C \\
&\quad \ge \mathrm{det}(A+B) + \mathrm{det} (A+C)+\mathrm{det}(B+C), 
\end{align*}
which is the main result obtained  in \cite{Lin14} 
by  using majorization theory.

\section*{Acknowledgments}
The work is supported by the Fundamental Research Funds for 
the Central Universities of Central South University. 
The first author would like to thank Dr. Minghua Lin, 
who introduced and encouraged him to the study of fascinating matrix theory
when he was an undergraduate at the HNNU. 
All authors are grateful for valuable comments from the referee, 
which considerably improve the presentation of our manuscript.

\end{document}